
\documentclass[12pt,leqno]{article}


\usepackage{graphicx}
\usepackage{float}



\usepackage{amsmath,amsfonts,amssymb,amsthm}

\usepackage{url}
\usepackage[margin=1in]{geometry}

\usepackage{xcolor}



\newtheorem{thm}{Theorem}
\newtheorem{lem}{Lemma}
\newtheorem{prop}[lem]{Proposition}
\newtheorem{cor}[thm]{Corollary}


\theoremstyle{remark}
\newtheorem{remark}[lem]{Remark}

\theoremstyle{definition}


\numberwithin{lem}{section}
\numberwithin{equation}{section}




\newcommand{\NN}{\mathbb{N}}

\newcommand{\ZZ}{\mathbb{Z}}


\newcommand{\bfa}{\mathbf{a}}

\newcommand{\cP}{\mathcal{P}}
\newcommand{\cS}{\mathcal{S}}


\newcommand{\Fa}{F^{(a)}}

\newcommand{\Fab}{F^{(ab)}}
\newcommand{\Gab}{G^{(ab)}}


\begin{document}


\title{On the Continued Fraction Expansion of Almost All
Real Numbers\thanks{Subject Classification: 11K50, 11A55.
Key words: Continued fraction, digits, random number, Pi.}}

\author{Alex Jin,
Shreyas Singh,
Zhuo Zhang,
AJ Hildebrand
\\[1ex]
\url{{ajin7,singh88,zhuoz4,ajh}@illinois.edu}\\[1ex]
Department of Mathematics\\
University of Illinois at Urbana-Champaign\\
Urbana, IL 61801, USA
}

\date{March 14, 2024}


\maketitle

\begin{abstract}
By a classical result of Gauss and Kuzmin, the continued fraction
expansion of a ``random'' real number contains each digit $a\in\NN$
with asymptotic frequency $\log_2(1+1/(a(a+2)))$.

We generalize this result in two directions: First, for certain sets
$A\subset\NN$,  we establish simple explicit formulas for the frequency
with which the continued fraction expansion of a random real number
contains a digit from the set $A$.  For example, we show that digits of
the form $p-1$, where $p$ is prime, appear with frequency $\log_2(\pi^2/6)$. 

Second, we obtain a simple formula for the frequency with which a
string of  $k$ consecutive digits $a$ appears in the continued fraction
expansion of a random real number. In particular, when $a=1$, this
frequency is given by $|\log_2(1+(-1)^k/F_{k+2})|$, where $F_n$
is the $n$th Fibonacci number.

Finally, we compare the frequencies predicted by these results with
actual frequencies found among the first 300 million continued fraction 
digits of $\pi$, and we provide strong statistical evidence that the  
continued fraction expansion of $\pi$ behaves like that of a random real
number.
\end{abstract}

\section{Introduction}
\label{sec:intro}

\subsection{Decimal expansions and normal numbers}

If one picks a ``random'' real number in $(0,1)$ and expands it in base
$10$, then $1/10$ of the digits will be $0$, $1/10$ will be $1$, and so
on. That is, the proportion of digits $0$ among the first $n$ decimal
digits of the number converges to $1/10$ as $n\to\infty$, and the same
holds for any other digit $d\in\{0,1,\dots,9\}$. More generally, any
finite string $d_1\dots d_k$ of $k$ digits in $\{0,1,\dots,9\}$ occurs
in the decimal expansion of the number with frequency $1/10^k$.

A number whose decimal expansion has this property is called
\emph{normal with respect to base $10$}; normality with respect to other
integer bases $b\ge 2$ is defined analogously. 
It is a classical result of Borel (see \cite{borel1909} or \cite[Theorem
8.11]{niven1956}) that almost all real numbers are
normal with respect to all integer bases $b\ge 2$; that is, the set of
numbers that are \emph{not} normal has Lebesgue measure $0$.
Therefore, if we pick a real number ``at random'' (e.g., uniformly from
a finite interval), then, with probability $1$, this number will be
normal, and the statistics of the digits in its decimal (or base $b$)
expansion are well-understood. 

In contrast to such ``almost all'' type results, we know almost nothing
about the statistics of the digits in the expansion of \emph{specific}
irrational numbers. In fact,
with the exception of some specially constructed numbers (for
example, the Champernowne constant \cite{champernowne1933}
$0.1234567891011121314\dots$), for
most ``natural'' irrational constants 
it is not even known whether
each digit occurs infinitely often in the decimal expansion of the
number.  In particular, this is the case for the number
$\pi=3.141592\dots$: Although there exists overwhelming
\emph{statistical} evidence towards the normality of $\pi$, a proof of
even a very weak form of normality seems to be out of reach; see, e.g.,
\cite{aragon2013}, \cite{bailey2014}. 

\subsection{Continued fraction expansions}
In this paper, we consider similar questions with respect to the \emph{continued
fraction expansion} of real numbers, that is, expansions of the form  
\begin{equation}
\label{eq:cf-def}
x=a_0+\cfrac{1}{a_1+\cfrac{1}{a_2+\cfrac{1}{\ddots}}} = [a_0;
a_1,a_2,\dots],
\end{equation}
where $a_0=\lfloor x\rfloor$ and $a_i=a_i(x)$, $i=1,2,\dots$, are positive
integers, which we call the \emph{continued fraction digits}\footnote{%
We do not include the leading term $a_0$ here as $a_0$ may be an
arbitrary integer, while the terms $a_1,a_2,\dots$ are restricted to
positive integers.} of $x$.  It is well-known  (see, e.g., \cite[Theorem
5.11]{niven1956})
that any \emph{irrational} number $x$ has a unique \emph{infinite} continued fraction 
expansion of the form \eqref{eq:cf-def}. 
For example, the continued fraction expansion of $\pi$ is 
\begin{align*}
\pi= 3+\cfrac{1}{7+\cfrac{1}{15+\cfrac{1}{\ddots}}}
= [3; 7, 15, \cdots].
\end{align*}
Conversely, given any integer $a_0$ and any sequence of positive
integers $a_1,a_2,\dots$, there is a unique irrational number $x$ with
continued fraction expansion \eqref{eq:cf-def}.
In this sense, the continued fraction digits are analogous
to the decimal digits of a number, and we can therefore 
ask similar questions as above:
\begin{itemize}

\item \textbf{Statistics of continued fraction digits of a random
real number.}
What can we say about the continued fraction digits of a ``random''
real number (or, equivalently, of almost all real numbers)? 

\item \textbf{Statistics of continued fraction digits of $\pi$.} 
What can we say about the continued fraction digits of \emph{specific}
``natural'' irrational numbers such as the number $\pi$? 
In particular, do the continued fraction digits of $\pi$ 
behave like those of a random real number?
\end{itemize}

Our state of knowledge with respect to these two questions is similar to
that for the usual decimal and base $b$ expansions.  On the one hand,
the statistical nature of the continued fraction expansion of
\emph{almost all} real numbers is now well understood. On the other
hand, we know almost nothing about the statistics of the continued
fraction digits of $\pi$ and most other classical constants.

\subsection{Classical results on the statistics of continued fraction digits}

The study of the statistical nature of continued fraction expansions 
originated with Gauss and was further developed by
Kuzmin \cite{kuzmin1929}, L\'evy \cite{levy1929}, Khinchin
\cite{khinchin-book}, and others.  We mention here two classical
results in this field. 

\emph{Khinchin's Theorem} (see \cite[p.~93]{khinchin-book}, or
\cite[Proposition 4.1.8]{neverending-fractions-book})
states that the geometric mean of the first
$n$ continued fraction digits of a random real number converges, as
$n\to\infty$, to the constant (now known as \emph{Khinchin's constant})
\begin{equation}
\label{eq:khinchin-constant}
K_0=\prod_{k=1}^\infty \left(1+\frac1{k(k+2)}\right)^{\log_2 k}
=2.685452\dots,
\end{equation}
where $\log_2$ denotes the base $2$ logarithm.
That is, almost all real numbers $x$ satisfy
\begin{equation}
\label{eq:khinchin-theorem}
\lim_{n\to\infty}\left(a_1(x)\dots a_n(x)\right)^{1/n}=K_0,
\end{equation}
where $a_i(x)$, $i=1,2,\dots$,
are the continued fraction digits of $x$ defined by \eqref{eq:cf-def}.

The \emph{Gauss-Kuzmin Theorem} (see Lemma \ref{lem:gauss-kuzmin} below)
states that the frequency of a given digit $a\in\NN$
in the continued fraction expansion of a 
random real number is given by
\begin{equation}
\label{eq:gk-distribution-def}
P(a)=\log_2\left(1+\frac1{a(a+2)}\right).
\end{equation} 
That is, almost all real numbers $x$ satisfy
\begin{equation}
\label{eq:Pa-def}
\lim_{n\to\infty}\frac1n\#\{1\le i\le n: a_i(x)=a\}=P(a), \quad a\in\NN.
\end{equation}
Thus, for example, around $\log_2(4/3)\approx 41.5\%$ of
the continued fraction digits of a random real number will be $1$, 
around $\log_2(9/8)\approx 17\%$ of these digits will be $2$, and so on.
The numbers $P(a)$ defined by \eqref{eq:gk-distribution-def}
form a discrete probability distribution on $\NN$, called the 
\emph{Gauss-Kuzmin distribution} and depicted in Figure \ref{fig:gk}.
\begin{figure}[H]
\begin{center}
\includegraphics[width=0.7\textwidth]{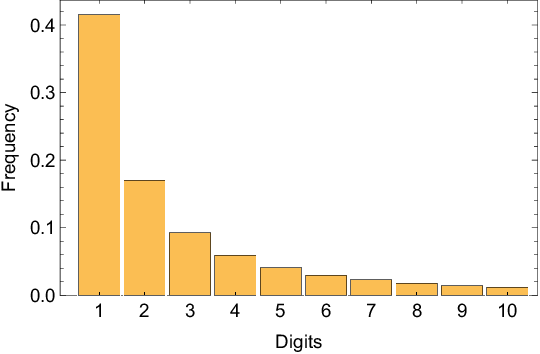}
\caption{The Gauss-Kuzmin distribution 
$P(a)=\log_2(1+\frac1{a(a+2)})$, $a=1,2,\dots$. ($\log_2 t =\log
t/\log 2$ denotes the base $2$ logarithm.)}
\label{fig:gk}
\end{center}
\end{figure}

As natural extensions of the Gauss-Kuzmin result on the frequency of
a single continued fraction digit $a$, one can consider the
following frequencies\footnote{For simplicity of notation,
we use the same symbol, $P(\dots)$,
to denote single digit frequencies, subset frequencies,
and frequencies of strings. This should not cause any problems as the
meaning will always be clear from the context.}:
\begin{itemize}
\item\textbf{Subset frequencies:}  
The frequency, $P(A)$, with which the continued fraction digits of
a random real number belong to a given subset $A\subset \NN$.
More precisely, given a set $A\subset\NN$ and 
an irrational number $x$ with continued fraction
digits $a_i(x)$, $i=1,2,\dots$, 
we are interested in the limit 
\begin{equation}
\label{eq:PA-def}
\lim_{n\to\infty}\frac1n\#\{1\le i\le n: a_i(x)\in A\}.
\end{equation}

\item 
\textbf{String frequencies:}
The frequency, $P(\bfa)$, with which the continued fraction expansion of 
a random real number contains a given finite string 
$\bfa=(a_1,\dots,a_k)$, of digits $a_i\in\NN$. More precisely, given an
irrational number $x$ with continued fraction digits $a_i(x)$, 
we are interested in the limit 
\begin{equation}
\label{eq:Pbfa-def}
\lim_{n\to\infty}\frac1n\#\{1\le i\le n: 
a_{i+1}(x)=a_1,\dots,a_{i+k}(x)=a_k\}.
\end{equation}

\end{itemize}

It follows from the Gauss-Kuzmin theory (see Lemmas
\ref{lem:gauss-kuzmin} and \ref{lem:gauss-kuzmin-generalized}) that, for
almost all real numbers $x$, the limits in \eqref{eq:PA-def} and
\eqref{eq:Pbfa-def} exist and depend only on the set $A$ (respectively
the string $\bfa$). This justifies the use of the notation $P(A)$ and
$P(\bfa)$ for these frequencies. The Gauss-Kuzmin theory also yields  
explicit formulas for the frequencies $P(A)$ and $P(\bfa)$, but these
formulas are in general quite complicated,  involving either infinite
sums (in the case of $P(A)$ for infinite sets $A$) or continued fraction
convergents (in the case of $P(\bfa)$).  In our main results, Theorems
\ref{thm:Pk}--\ref{thm:strings} below, we exhibit a class of
sets $A$ and strings $\bfa$ for which these frequencies have
surprisingly simple closed-form expressions.

\subsection{Statement of main results}

We first consider the frequencies
$P(A)$.  In the case when the set $A$ is an arithmetic progression, Nolte
\cite{nolte1990} (see also Girstmair \cite{girstmair2020}
and \cite[Proposition 4.1.5]{iosifescu-kraaikamp-book})
established a closed form expression for the subset frequency $P(A)$ in
terms of the Euler gamma function.

In our first two results, we establish similarly
simple expressions for $P(A)$ in the case when $A$ is a set of
shifted prime powers or shifted squares, defined as follows:
\begin{align}
\label{eq:Pk-def}
\cP_k&=\{p^k-1\in\NN: p\text{ prime }\}\quad (k\in\NN), 
\\
\label{eq:S-def}
\cS&=\{n^2-1: n\in\NN, n>1\}.
\end{align}

\begin{thm}[Shifted prime powers]
\label{thm:Pk}
Let $k$ be a positive integer. Then the frequency of digits of the form 
$p^k-1$, where $p$ is prime, in the continued fraction expansion of 
a random real number is given by 
\begin{equation}
\label{eq:Pk-result}
P(\cP_k)=\log_2\zeta(2k),
\end{equation}
where $\zeta(s)$ is the Riemann zeta function. In particular,
the frequency of digits of the form 
$p-1$, where $p$ is prime, in the continued fraction expansion of 
a random real number is given by 
\begin{equation}
\label{eq:P1-result}
P(\cP_1)=\log_2\zeta(2)=\log_2\frac{\pi^2}{6}=0.718029\dots.
\end{equation}
\end{thm}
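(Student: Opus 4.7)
The plan is to evaluate $P(\mathcal{P}_k)$ by summing the single-digit Gauss-Kuzmin frequencies $P(a)$ over $a \in \mathcal{P}_k$ and recognizing the resulting product as the Euler product for $\zeta(2k)$.

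First I would invoke the generalized Gauss-Kuzmin statement (Lemma \ref{lem:gauss-kuzmin-generalized}) to justify that, for a countable set $A \subset \NN$, the subset frequency $P(A)$ exists for almost every $x$ and equals $\sum_{a \in A} P(a)$, with $P(a)$ given by \eqref{eq:gk-distribution-def}. Applied to $A = \mathcal{P}_k$, this gives
\begin{equation*}
P(\mathcal{P}_k) = \sum_{p \text{ prime}} \log_2\!\left(1 + \frac{1}{(p^k-1)(p^k+1)}\right).
\end{equation*}

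Next I would perform the key algebraic simplification: since $(p^k-1)(p^k+1) = p^{2k} - 1$, each summand rewrites as
\begin{equation*}
\log_2\!\left(1 + \frac{1}{p^{2k}-1}\right) = \log_2\!\left(\frac{p^{2k}}{p^{2k}-1}\right) = -\log_2\!\left(1 - \frac{1}{p^{2k}}\right).
\end{equation*}
Summing over primes and pulling the minus sign into the logarithm converts the sum into $\log_2$ of the infinite product $\prod_{p}(1 - p^{-2k})^{-1}$, which by Euler's product formula for $\zeta(s)$ (valid since $2k \ge 2 > 1$) equals $\zeta(2k)$. This yields \eqref{eq:Pk-result}, and specializing to $k=1$ with $\zeta(2) = \pi^2/6$ gives \eqref{eq:P1-result}.

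There is no real obstacle here beyond a bookkeeping check: one must confirm that the rearrangement from a sum of logarithms to the logarithm of an infinite product is legitimate (all factors are positive and the product converges absolutely for $2k \ge 2$), and that the Gauss-Kuzmin framework indeed permits subset frequencies over infinite sets to be computed termwise. Both are standard, so the argument reduces to the one-line telescoping identity $(p^k-1)(p^k+1) = p^{2k}-1$ combined with the Euler product.
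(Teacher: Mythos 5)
Your proposal is correct and takes essentially the same route as the paper: the paper first proves a general formula $P(A)=-\log_2\prod_{a\in A}\bigl(1-\tfrac1{(a+1)^2}\bigr)$ from the single-digit Gauss--Kuzmin frequencies via the same identity $1+\tfrac1{a(a+2)}=\bigl(1-\tfrac1{(a+1)^2}\bigr)^{-1}$, then specializes to $a=p^k-1$ and invokes the Euler product, exactly as you do. The one small slip is attributional: the fact you need (existence of $P(A)$ and $P(A)=\sum_{a\in A}P(a)$) rests on the single-digit Gauss--Kuzmin theorem (Lemma \ref{lem:gauss-kuzmin}), not on the string version (Lemma \ref{lem:gauss-kuzmin-generalized}), which concerns frequencies of finite digit strings.
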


\begin{thm}[Shifted squares]
\label{thm:S}
The frequency of digits of the form 
$n^2-1$, where $n\in\NN$, in the continued fraction expansion of 
a random real number is given by 
\begin{equation}
\label{eq:PS-result}
P(\cS)=\log_2\frac{8\pi}{e^\pi-e^{-\pi}}=0.121832\dots 
\end{equation}
\end{thm}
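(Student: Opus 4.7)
The plan is to reduce the claim to an explicit evaluation of an infinite product, which splits nicely into a telescoping piece and a classical Euler product for $\sinh$.

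First I would apply the Gauss-Kuzmin theorem in the form that, for any set $A\subset\NN$, $P(A)=\sum_{a\in A}P(a)$ where $P(a)=\log_2(1+1/(a(a+2)))$ (this countable additivity follows from the measure-theoretic version of the Gauss-Kuzmin result invoked earlier in the paper). Specialized to $A=\cS$, setting $a=n^2-1$ gives $a(a+2)=(n^2-1)(n^2+1)=n^4-1$, and hence $1+1/(a(a+2))=n^4/(n^4-1)$. Therefore
\begin{equation*}
P(\cS)=\sum_{n=2}^\infty\log_2\frac{n^4}{n^4-1}
=\log_2\prod_{n=2}^\infty\frac{n^4}{n^4-1}
=\log_2\left(\prod_{n=2}^\infty\left(1-\frac{1}{n^4}\right)\right)^{-1}.
\end{equation*}

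Next I would factor $1-1/n^4=(1-1/n^2)(1+1/n^2)$ and evaluate each factor separately. The first product telescopes:
\begin{equation*}
\prod_{n=2}^N\left(1-\frac{1}{n^2}\right)=\prod_{n=2}^N\frac{(n-1)(n+1)}{n^2}=\frac{N+1}{2N}\longrightarrow\frac12.
\end{equation*}
For the second, I would use Euler's classical product formula $\sinh(\pi z)=\pi z\prod_{n=1}^\infty(1+z^2/n^2)$ at $z=1$, giving $\prod_{n=1}^\infty(1+1/n^2)=\sinh(\pi)/\pi$; dividing out the $n=1$ factor, which equals $2$, yields $\prod_{n=2}^\infty(1+1/n^2)=\sinh(\pi)/(2\pi)$.

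Combining these two evaluations gives
\begin{equation*}
\prod_{n=2}^\infty\left(1-\frac{1}{n^4}\right)=\frac12\cdot\frac{\sinh(\pi)}{2\pi}=\frac{e^\pi-e^{-\pi}}{8\pi},
\end{equation*}
and taking the reciprocal and base-$2$ logarithm delivers the claimed formula $P(\cS)=\log_2(8\pi/(e^\pi-e^{-\pi}))$. The only nontrivial step is the appeal to Euler's product formula for $\sinh$; everything else is bookkeeping. I do not expect a serious obstacle here, since the countable additivity of $P$ over subsets and the required analytic identities are all standard. The main conceptual point, which is also what makes Theorem \ref{thm:S} clean, is the fortuitous fact that when $a=n^2-1$, the Gauss-Kuzmin denominator $a(a+2)$ becomes $n^4-1$, matching exactly the factorization that $\sinh(\pi)$ provides.
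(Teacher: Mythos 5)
Your proposal is correct and follows essentially the same route as the paper: reduce $P(\cS)$ to $\log_2\prod_{n\ge 2}(1-1/n^4)^{-1}$, split off the telescoping factor $\prod(1-1/n^2)=1/2$, and evaluate $\prod_{n\ge 2}(1+1/n^2)$ via Euler's product (the paper sets $z=i$ in the sine product where you invoke the equivalent $\sinh$ form). The only cosmetic difference is that the paper first isolates the identity $1+1/(a(a+2))=(1-1/(a+1)^2)^{-1}$ as a general proposition before specializing to $a=n^2-1$, whereas you substitute directly.
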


The relatively large value $0.71\dots$ of the frequency
\eqref{eq:P1-result} for ``shifted prime'' digits 
is due to the fact that the set $\cP_1=\{1,2,4,6,10,12,16,18,22,\dots\}$ of
shifted primes is rather dense at the beginning. In particular, it 
contains the digits $1$ and $2$, the two most
frequent digits in the continued fraction expansion of a random number,
as well as two of the next four most frequent digits, namely $4$ and $6$. 
The combined
frequencies of the digits $1,2,4,6$ in $\cP_1$ (i.e., the sum of the
Gauss-Kuzmin probabilities $P(a)$ for $a=1,2,4,6$) alone is around $0.67\dots$ 
and thus accounts for the bulk of the shifted prime digit frequency given
in \eqref{eq:P1-result}.  By contrast, 
the set $\cS$ of shifted squares contains neither of the digits $1$ and
$2$ and only one of the first seven digits; hence the much
smaller shifted square digit frequency in \eqref{eq:PS-result}.

We next consider occurrences of  
finite strings $\bfa=(a_1,\dots,a_k)$ of continued fraction digits.
A key difference between decimal expansions and continued fraction
expansions of a random real number is that, while the consecutive decimal
digits behave like \emph{independent} random variables, 
this is not the case for the continued fraction digits.   
That is, the string frequency
$P((a_1,\dots,a_k))$ is, in general, \emph{not} equal to 
the product of the corresponding single digit frequencies, $P(a_1)\dots
P(a_k)$.  In particular, the frequency
$P((a,\dots,a))$ of a ``run'' of $k$ consecutive digits $a$
is \emph{not} equal to $P(a)^k$, the $k$-th power of the
single digit frequency.
In the following theorem, we determine these run frequencies explicitly
in terms of a simple two-term recurrence sequence, which reduces to the
Fibonacci sequence when $a=1$.

\begin{thm}[Strings of identical digits]
\label{thm:strings}
Let $a$ and $k$ be positive integers. Then
the frequency of a string of $k$ consecutive digits $a$ in the continued
fraction expansion of 
a random real number is given by 
\begin{equation}
\label{eq:string-result}
P(\underbrace{(a,\dots,a)}_k)=
\left|\log_2\left(1+\frac{(-1)^k} {\left(\Fa_{k+2}\right)^2}\right)\right|,
\end{equation}
where $\Fa_n$, $n=1,2,\dots$ is defined by 
\begin{equation}
\label{eq:tk-def}
\Fa_1= \Fa_2=1,\quad
\Fa_n=a\Fa_{n-1} +\Fa_{n-2}
\quad(n\ge 3).
\end{equation}
In particular, 
the frequency of a string of $k$ consecutive digits $1$ in the continued
fraction expansion of 
a random real number is given by 
\begin{equation}
\label{eq:string1-result}
P(\underbrace{(1,\dots,1)}_k)=
\left|\log_2\left(1+\frac{(-1)^k} {F_{k+2}^2}
\right)\right|,
\end{equation}
where  $F_n$ is the $n$th Fibonacci number.
\end{thm}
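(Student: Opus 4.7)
The plan is to combine the generalized Gauss--Kuzmin formula for the measure of a cylinder set (Lemma~\ref{lem:gauss-kuzmin-generalized}) with a pair of clean identities for the convergents of the periodic continued fraction $[0;a,a,\dots]$. Writing $p_n/q_n$ for the $n$th convergent of $[0;a_1,a_2,\dots]$, the cylinder set of irrationals whose first $k$ continued fraction digits are $a_1,\dots,a_k$ is the interval with endpoints $p_k/q_k$ and $(p_k+p_{k-1})/(q_k+q_{k-1})$, so integrating the Gauss density $1/\bigl((\log 2)(1+x)\bigr)$ across this interval yields
\[
P(a_1,\dots,a_k)=\left|\log_2\frac{(p_k+q_k)(q_k+q_{k-1})}{q_k(p_k+q_k+p_{k-1}+q_{k-1})}\right|,
\]
and the whole task reduces to simplifying this ratio in the special case $a_1=\cdots=a_k=a$.

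First I would observe that when every partial quotient equals $a$, the numerators $p_n$ satisfy the same recurrence as the denominators $q_n$ but with initial data shifted by one index, so $p_n=q_{n-1}$ for all $n\ge 1$. Substituting this identity collapses the argument of the logarithm to
\[
\frac{(q_k+q_{k-1})^2}{q_k(q_k+2q_{k-1}+q_{k-2})}.
\]

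Next I would prove, by a one-line induction on $n$ using $q_n=aq_{n-1}+q_{n-2}$ and $\Fa_{n+2}=a\Fa_{n+1}+\Fa_n$, the relation $q_n+q_{n-1}=\Fa_{n+2}$ connecting these convergent denominators to the generalized Fibonacci sequence of \eqref{eq:tk-def}. This turns the numerator above into $(\Fa_{k+2})^2$ and rewrites the inner factor in the denominator as $q_k+2q_{k-1}+q_{k-2}=\Fa_{k+2}+\Fa_{k+1}$.

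The step I expect to carry the real weight is a Cassini-type identity $q_kq_{k-2}-q_{k-1}^2=(-1)^k$ for the denominators of the constant-$a$ expansion, which I would again establish by induction from $q_k=aq_{k-1}+q_{k-2}$: expanding $q_kq_{k-2}$ makes the cross-term $aq_{k-1}q_{k-2}$ cancel against $q_{k-1}(aq_{k-2})$, leaving $-(q_{k-1}q_{k-3}-q_{k-2}^2)$, so the sign flips at each step. Combined with the previous step, this gives
\[
q_k(q_k+2q_{k-1}+q_{k-2})=(q_k+q_{k-1})^2+(q_kq_{k-2}-q_{k-1}^2)=(\Fa_{k+2})^2+(-1)^k,
\]
so the fraction inside the logarithm equals $(\Fa_{k+2})^2\bigl/\bigl((\Fa_{k+2})^2+(-1)^k\bigr)$, whose reciprocal is $1+(-1)^k/(\Fa_{k+2})^2$. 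The absolute value in front of $\log_2$ absorbs the sign, giving \eqref{eq:string-result}, and \eqref{eq:string1-result} follows at once since $\Fa_n=F_n$ when $a=1$. The only genuine hurdle is isolating the Cassini-type identity in the correct form and tracking its sign against the parity of $k$; once that is in hand, everything else is routine algebra.
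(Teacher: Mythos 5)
Your proposal is correct; every identity you use checks out ($p_n=q_{n-1}$ for the constant-$a$ expansion, $q_n+q_{n-1}=\Fa_{n+2}$, the Cassini-type identity $q_kq_{k-2}-q_{k-1}^2=(-1)^k$, and the expansion $q_k(q_k+2q_{k-1}+q_{k-2})=(q_k+q_{k-1})^2+(q_kq_{k-2}-q_{k-1}^2)$), and the two starting formulas are reconciled by the determinant identity $p_kq_{k-1}-p_{k-1}q_k=(-1)^{k-1}$. The difference from the paper is that you work from the unreduced Gauss measure of the cylinder, $\bigl|\log_2\bigl((p_k+q_k)(q_k+q_{k-1})\big/(q_k(p_k+p_{k-1}+q_k+q_{k-1}))\bigr)\bigr|$, and therefore have to manufacture the $(-1)^k$ yourself via the Cassini identity --- the step you single out as carrying the real weight. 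But Lemma~\ref{lem:gauss-kuzmin-generalized} as stated already has that reduction built in: the frequency is $\bigl|\log_2\bigl(1+(-1)^k/((p_k+q_k)(q_k+q_{k-1}))\bigr)\bigr|$, so once you know $p_k=q_{k-1}$ both factors equal $q_k+q_{k-1}=\Fa_{k+2}$ and the theorem drops out with no further algebra. That is exactly what the paper does, phrased as showing that $r_n=q_n+q_{n-1}$ and $s_n=p_n+q_n$ satisfy the recurrence of $\Fa$ with initial values matching $\Fa_2,\Fa_3$. Your route is a valid, self-contained alternative if one starts only from the cylinder endpoints and the Gauss density, but against the lemma as quoted the Cassini detour is superfluous work rather than the crux.
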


By estimating the term $F_{k+2}$ in \eqref{eq:string1-result}   
via Binet's formula (see, e.g., \cite[(10.14.1)]{hardy-wright2008}),
we obtain the following corollary.

\begin{cor}
\label{cor:strings}
As $k\to\infty$, the frequency \eqref{eq:string1-result} of a string of
$k$ consecutive digits $1$ in the continued fraction expansion of a
random real number satisfies
\begin{equation}
\label{eq:string1-result2}
P(\underbrace{(1,\dots,1)}_k)
=\frac{5}{\Phi^{2(k+2)}\log 2}\left(1 +
O\left(\frac1{\Phi^{2(k+2)}}\right)\right),
\end{equation}
where $\Phi=(1+\sqrt{5})/2$ is the Golden Ratio.
\end{cor}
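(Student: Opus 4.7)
The plan is to directly substitute Binet's formula into \eqref{eq:string1-result} and Taylor-expand the resulting logarithm, treating everything as power series in $\Phi^{-2(k+2)}$, which is a small parameter as $k\to\infty$.

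First, I would apply Binet's formula in the form $F_n=(\Phi^n-\psi^n)/\sqrt{5}$, where $\psi=(1-\sqrt{5})/2=-1/\Phi$, to write
\[
F_{k+2}^2=\frac{1}{5}\bigl(\Phi^{k+2}-(-\Phi)^{-(k+2)}\bigr)^2
=\frac{\Phi^{2(k+2)}}{5}\left(1-\frac{(-1)^{k+2}}{\Phi^{2(k+2)}}\right)^{\!2}.
\]
Inverting and using the geometric series $(1-u)^{-2}=1+O(u)$ for $u=(-1)^{k+2}/\Phi^{2(k+2)}$, this gives
\[
\frac{(-1)^k}{F_{k+2}^2}=\frac{5(-1)^k}{\Phi^{2(k+2)}}
\left(1+O\!\left(\frac{1}{\Phi^{2(k+2)}}\right)\right).
\]

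Next, since this quantity tends to $0$, I would apply the Taylor expansion $\log_2(1+x)=x/\log 2+O(x^2)$, valid for $|x|$ bounded away from $1$. Substituting the previous display yields
\[
\log_2\!\left(1+\frac{(-1)^k}{F_{k+2}^2}\right)
=\frac{5(-1)^k}{\Phi^{2(k+2)}\log 2}
\left(1+O\!\left(\frac{1}{\Phi^{2(k+2)}}\right)\right),
\]
where the quadratic remainder from the Taylor expansion has been absorbed into the $O(\Phi^{-2(k+2)})$ correction, since it is of order $\Phi^{-4(k+2)}$, which is smaller by a factor of $\Phi^{-2(k+2)}$ relative to the main term.

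Finally, I would take absolute values: since the main term $5(-1)^k/(\Phi^{2(k+2)}\log 2)$ dominates the error for large $k$, the sign of the expression inside the absolute value agrees with $(-1)^k$ for $k$ sufficiently large, so taking absolute values simply removes the factor $(-1)^k$ and leaves \eqref{eq:string1-result2} intact. There is no serious obstacle here; the computation is entirely routine, and the only minor care needed is in verifying that the various $O$-terms can be consolidated into a single $O(\Phi^{-2(k+2)})$ relative error, which follows because $\Phi^{-2(k+2)}$ is the largest of the error scales involved.
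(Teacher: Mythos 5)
Your proposal is correct and follows essentially the same route as the paper's proof: substitute Binet's formula to get $1/F_{k+2}^2=\bigl(5/\Phi^{2(k+2)}\bigr)\bigl(1+O(\Phi^{-2(k+2)})\bigr)$, apply the expansion $\log_2(1+x)=(x/\log 2)(1+O(x))$, and absorb the errors into a single relative $O(\Phi^{-2(k+2)})$ term before taking absolute values.
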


The approximations provided by the corollary are remarkably accurate,
even for very small values of $k$.  Table
\ref{table:string1frequencies} below shows the exact string frequencies
given by Theorem \ref{thm:strings}, i.e.,
$|\log_2(1+(-1)^k/F_{k+2}^2)|$, the approximations provided by Corollary
\ref{cor:strings}, i.e., $5/ (\Phi^{2(k+2)}\log 2)$, and the relative error
in these approximations.

\begin{table}[H]
\begin{center}
\renewcommand{\arraystretch}{1.5}
\begin{tabular}{|c|l|l|l|}
\hline
$k$ & Exact value & Approximation & Relative error 
\\
\hline
 1 & $\log_2 (4/3)=0.415037\dots$ &
   0.401993\dots& 3.24\%\\
 2 & $\log_2 (10/9)=0.152003\dots$ &
   0.153548\dots & 1.00\%\\
 3 & $\log_2 (25/24)=0.058893\dots$
   & 0.058650\dots & 0.41\% \\
 4 & $\log_2 (65/64)= 0.022367\dots$
   & 0.022402\dots & 0.15\% \\
 5 & $\log_2 (169/168)= 0.008562\dots$ &
 0.008556\dots & 0.06\% 
\\
\hline
\end{tabular}
\caption{Exact and approximate values for the frequencies of a string of
$k$ digits $1$ in the continued fraction expansion of a random real
number.} 
\label{table:string1frequencies}.
\end{center}
\end{table}

\subsection{Outline of paper}

The remainder of this paper is organized as follows. In Section
\ref{sec:background} we review the elementary theory of continued
fractions, and we state the key results from the metric theory of
continued fractions that we will use in proving our results.
Section \ref{sec:subsets} contains the proofs of Theorems
\ref{thm:Pk} and \ref{thm:S}, and Section \ref{sec:strings} contains the
proofs of Theorem \ref{thm:strings} and Corollary \ref{cor:strings}. 
In Section \ref{sec:pi} we present empirical data on the frequencies
predicted by these theorems based on the first 300 million continued
fraction digits of $\pi$, and the results of statistical tests comparing
the actual and predicted frequencies.  We conclude in Section
\ref{sec:conclusion} with some remarks on possible extensions 
of our results and open problems.


\section{Background on continued fractions}
\label{sec:background}

We begin by recalling some key definitions and results from the elementary
theory of continued fractions.  Details and proofs can be found in
\cite[Chapter 2]{neverending-fractions-book}, \cite[Chapter
9]{hardy-wright2008}, \cite[Chapters I--II]{khinchin-book}, and
\cite[Chapter 5]{niven1956},

A \emph{continued fraction} is a finite or infinite expression of the form
\begin{equation}
\label{eq:cf-def1}
a_0+\cfrac{1}{a_1+\cfrac{1}{a_2+\cfrac{1}{\ddots}}} = [a_0;
a_1,a_2,\dots],
\end{equation}
where $a_0$ is an arbitrary integer, and $a_1,a_2,\dots $ are positive
integers.  

Clearly, any \emph{finite} (i.e., terminating) continued fraction
represents a rational number. Conversely, any rational number can be
represented as a finite continued fraction $[a_0;a_1,\dots,a_n]$,
and this representation is unique if we require $a_n>1$. 

An \emph{infinite} continued fraction is defined as the limit, as
$n\to\infty$,  of the finite continued fractions obtained by truncating
the given infinite continued fraction after $n$ terms:
\begin{equation}
\label{eq:cf-infinite-def}
[a_0;a_1,a_2,\dots]=
\lim_{n\to\infty} [a_0;a_1,a_2,\dots,a_n].
\end{equation}
It is known (see, e.g., \cite[Theorem 5.11]{niven1956})
that any \emph{irrational} real number has a unique infinite
continued fraction expansion; that is, there exists a unique sequence
$a_0,a_1,a_2,\dots$ satisfying $a_0\in\ZZ$ and $a_i\in\NN$ for
$i=1,2,\dots$, such that 
\begin{equation}
\label{eq:cf-infinite-representation}
x= \lim_{n\to\infty} [a_0;a_1,a_2,\dots,a_n].
\end{equation}
Conversely, given any integer $a_0$ and any sequence
$a_1,a_2,\dots$ of positive integers, the limit
\eqref{eq:cf-infinite-def} exists and represents an irrational number.

The numbers $a_i$ in the representation
\eqref{eq:cf-infinite-representation}
can be computed recursively by
\begin{align}
\label{eq:cf-algorithm1}
a_0&=\lfloor x\rfloor, \quad r_0 = x-a_0,
\\
\label{eq:cf-algorithm2}
a_n&=\left\lfloor\frac1{r_{n-1}}\right\rfloor,\quad r_n=\frac1{r_{n-1}}-a_n
\quad (n\ge 1).
\end{align}

Given an irrational number $x$ with continued fraction representation 
\eqref{eq:cf-infinite-representation}, the rational numbers represented
by the truncated continued fractions $[a_0;a_1,\dots,a_n]$,
$n=0,1,2,\dots$,  are called the \emph{convergents} of $x$. The $n$th
convergent is traditionally denoted by $p_n/q_n$; that is, we have
\begin{equation}
\label{eq:convergents-def}
[a_0;a_1,\dots,a_n]=\frac{p_n}{q_n},
\end{equation}
with the convention
that the denominator $q_n$ is a positive integer and the
numerator $p_n$ is an integer relatively prime to $q_n$.
For example, the first six convergents for $\pi$ (corresponding to
indices $n=0,1,\dots,5$) are  
\begin{equation}
\label{eq:pi-convergents}
\frac{3}{1},\quad \frac{22}{7},\quad\frac{333}{106},\quad
\frac{355}{113},\quad \frac{103993}{33102},\quad
\frac{104348}{33215}.
\end{equation}

A key property of a continued fraction convergent $p_n/q_n$ for an
\emph{irrational} number $x$ is that it represents the best rational
approximation to $x$ among all rational numbers with denominator bounded
by $q_n$. In other words, if $p/q$ is a rational number with denominator
satisfying $|q|\le q_n$, then either $p/q=p_n/q_n$ or
$|x-p/q|>|x-p_n/q_n|$.

The numerators $p_n$ and denominators $q_n$ of the convergents 
satisfy the recurrences
\begin{align}
\label{eq:pn-recurrence}
&p_{-1}=1,\quad p_0=a_0,\quad p_n=a_np_{n-1}+p_{n-2}\quad (n\ge 1),
\\
\label{eq:qn-recurrence}
&q_{-1}=0,\quad q_0=1,\quad q_n=a_nq_{n-1}+q_{n-2}\quad (n\ge 1).
\end{align}
From these recurrences it can be proved by induction that  
\begin{equation}
\label{eq:pn-qn-diff}
\frac{p_n}{q_n}-\frac{p_{n-1}}{q_{n-1}}=\frac{(-1)^{n-1}}{q_nq_{n-1}}.
\end{equation}
Since, by \eqref{eq:qn-recurrence}, the numbers $q_n$, $n\ge 1$, form a
strictly
increasing sequence of positive integers, \eqref{eq:pn-qn-diff}
implies that the sequence of convergents $p_n/q_n$, $n\ge 1$, converges
to a finite limit, given by the real number $x$ represented by the
infinite continued fraction $[a_0;a_1,a_2,\dots]$. It also follows from
\eqref{eq:pn-qn-diff} that the even-indexed convergents form an increasing
sequence and the odd-indexed convergents form a decreasing
sequence; that is, we have
\begin{equation}
\label{eq:convergents-sequence}
\frac{p_0}{q_0}<\frac{p_2}{q_2}<\frac{p_4}{q_4}\cdots < x < \cdots
\frac{p_5}{q_5}<\frac{p_3}{q_3}<\frac{p_1}{q_1}.
\end{equation}


We conclude this section by stating two key results from the metric
theory of continued fractions that we will need in proving Theorems
\ref{thm:Pk}, \ref{thm:S}, and \ref{thm:strings}. 
These results lie much deeper than the elementary properties of
continued fractions cited above, and their proofs are quite involved, 
requiring either very delicate elementary estimates (see, for example,
\cite{khinchin-book}), or the use of
results from ergodic theory (see, for example,
\cite{dajani-kraaikamp-book} or \cite{iosifescu-kraaikamp-book}). 
Both results are special cases of a more general theorem 
which in essence states that the transformation 
$T(x)=1/x-\lfloor1/x\rfloor$ that 
generates continued fractions via the recurrence
\eqref{eq:cf-algorithm2} 
is an \emph{ergodic} transformation on the interval $[0,1]$
with invariant measure $\mu([0,x])=\log_2(1+x)$.

\begin{lem}
[Gauss-Kuzmin Theorem, {\cite[Proposition 4.1.1]{iosifescu-kraaikamp-book}}]
\label{lem:gauss-kuzmin}
For almost all real numbers $x$, each digit $a$, 
$a=1,2,\dots$, appears in the
continued fraction expansion of $x$ with frequency 
\begin{equation}
\label{eq:gk-distribution}
P(a)=\log_2\left(1+\frac1{a(a+2)}\right).
\end{equation}
\end{lem}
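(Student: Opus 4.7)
The plan is to exploit the ergodic theory of the Gauss map, as hinted in the paragraph preceding the statement. I would define $T:[0,1]\to[0,1]$ by $T(x)=1/x-\lfloor 1/x\rfloor$ for $x\neq 0$ and $T(0)=0$, and let $\mu$ be the Gauss measure on $[0,1]$ with density $1/((1+x)\log 2)$. I would take as given the two deep facts that $\mu$ is $T$-invariant and that $T$ is ergodic with respect to $\mu$; both are proved in detail in \cite{iosifescu-kraaikamp-book}, and reproducing them lies well outside the scope of this paper.

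First, I would translate the event ``$a_i(x)=a$'' into a dynamical statement about $T$. Since the continued fraction digits $a_1,a_2,\dots$ are unchanged by integer shifts, it suffices to consider $x\in[0,1)$. Comparing the recurrence \eqref{eq:cf-algorithm2} with the definition of $T$, a short induction shows that $r_n=T^n(x)$ and $a_n(x)=\lfloor 1/T^{n-1}(x)\rfloor$ for all $n\ge 1$. Hence $a_i(x)=a$ is precisely the event $T^{i-1}(x)\in I_a$, where
\[
I_a=\left(\frac{1}{a+1},\,\frac{1}{a}\right].
\]

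Next, I would apply the Birkhoff ergodic theorem to the indicator function $\mathbf{1}_{I_a}$. By ergodicity, for $\mu$-almost every $x\in[0,1)$ we have
\[
\lim_{n\to\infty}\frac{1}{n}\#\{1\le i\le n:\,a_i(x)=a\}
=\lim_{n\to\infty}\frac{1}{n}\sum_{i=0}^{n-1}\mathbf{1}_{I_a}(T^i x)
=\mu(I_a).
\]
Because $\mu$ and Lebesgue measure are mutually absolutely continuous on $[0,1]$, ``$\mu$-almost every'' is interchangeable with ``Lebesgue-almost every,'' so the conclusion holds for almost all real $x$. A direct computation then evaluates $\mu(I_a)$:
\[
\mu(I_a)=\frac{1}{\log 2}\int_{1/(a+1)}^{1/a}\frac{dt}{1+t}
=\log_2\frac{1+1/a}{1+1/(a+1)}
=\log_2\frac{(a+1)^2}{a(a+2)}
=\log_2\!\left(1+\frac{1}{a(a+2)}\right),
\]
matching \eqref{eq:gk-distribution}.

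The main obstacle is entirely packaged into the black box: establishing the $T$-invariance of $\mu$ and, much harder, the ergodicity of $(T,\mu)$. The invariance reduces to verifying $\mu(T^{-1}[0,y])=\mu([0,y])$ via a sum over the preimage branches $x\mapsto 1/(n+y)$, which is a one-line calculation with partial fractions. Ergodicity, however, is the substantive classical result, traditionally proved either by Kuzmin's delicate elementary estimates on the iterates of the transfer operator or by identifying $T$ as an exact endomorphism in the sense of Rokhlin. Everything else in the argument---the dynamical reformulation, the appeal to Birkhoff, and the final logarithm computation---is routine bookkeeping once these facts are in hand.
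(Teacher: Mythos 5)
Your proposal is correct: the dynamical reformulation $a_i(x)=a\iff T^{i-1}(x)\in(1/(a+1),1/a]$, the appeal to Birkhoff's ergodic theorem, and the evaluation of the Gauss measure of that interval are all accurate, and you rightly isolate the invariance and ergodicity of the Gauss map as the only deep inputs. The paper itself gives no proof of this lemma---it is quoted directly from Iosifescu--Kraaikamp---but the ergodic-theoretic route you take is precisely the framework the paper sketches in the paragraph preceding Lemmas \ref{lem:gauss-kuzmin} and \ref{lem:gauss-kuzmin-generalized}, so your argument is the intended one rather than a genuinely different route.
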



\begin{lem}
[Generalized Gauss-Kuzmin Theorem,
{\cite[Proposition 4.1.2]{iosifescu-kraaikamp-book}}]
\label{lem:gauss-kuzmin-generalized}
Let $k$ be a positive integer and let 
$\bfa=(a_1,\dots,a_k)$ be a finite string of positive integers.
Then, for almost all real numbers $x$, the string $\bfa$ appears in the
continued fraction expansion of $x$ with frequency 
\begin{equation}
\label{eq:gk-generalized-distribution}
P(\bfa)=
\left|\log_2\left(1+\frac{(-1)^k}{(p_k+q_k)(q_k+q_{k-1})}\right)\right|
\end{equation}
where $p_i$ and $q_i$ are the numerators and
denominators of the convergents $p_i/q_i=[0;a_1,\dots,a_i]$, $i\le k$, 
defined recursively by \eqref{eq:pn-recurrence}
and \eqref{eq:qn-recurrence} with respect to the \emph{finite}
continued fraction 
$[0;a_1,\dots,a_k]$.
\end{lem}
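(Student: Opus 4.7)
The plan is to deduce Lemma~\ref{lem:gauss-kuzmin-generalized} from the ergodicity of the \emph{Gauss map} $T\colon[0,1)\to[0,1)$, $T(x)=1/x-\lfloor 1/x\rfloor$ (with $T(0)=0$), together with its invariant \emph{Gauss measure} $d\mu=(\log 2)^{-1}\,dx/(1+x)$, as already foreshadowed in the excerpt. The continued fraction recursion \eqref{eq:cf-algorithm1}--\eqref{eq:cf-algorithm2} gives the crucial identity $a_{i+1}(x)=a_1(T^i x)$, so the count in \eqref{eq:Pbfa-def} tracks the number of times the orbit $x,Tx,T^2x,\dots$ visits the \emph{cylinder set}
\begin{equation*}
E_{\bfa}=\{y\in[0,1):a_1(y)=a_1,\dots,a_k(y)=a_k\}.
\end{equation*}
By Birkhoff's ergodic theorem applied to the indicator function $\mathbf{1}_{E_{\bfa}}$, for $\mu$-almost every $x$ this limit equals $\mu(E_{\bfa})$; since $\mu$ and Lebesgue measure on $[0,1)$ are mutually absolutely continuous, the same holds for Lebesgue-almost every $x$.

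It then remains to evaluate $\mu(E_{\bfa})$ in closed form. Each $y\in E_{\bfa}$ can be written uniquely as $y=[0;a_1,\dots,a_k,\xi]$ with $\xi\in[1,\infty]$, and a straightforward induction from \eqref{eq:pn-recurrence}--\eqref{eq:qn-recurrence} gives the Möbius-style formula
\begin{equation*}
y=\frac{p_k\xi+p_{k-1}}{q_k\xi+q_{k-1}}.
\end{equation*}
Using the determinant identity $p_kq_{k-1}-p_{k-1}q_k=(-1)^{k-1}$ (which follows by induction from the same recurrences, cf.\ \eqref{eq:pn-qn-diff}), the derivative $dy/d\xi=(-1)^{k-1}(q_k\xi+q_{k-1})^{-2}$ has constant sign, so $y$ traces a monotone interval as $\xi$ varies over $[1,\infty]$. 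Hence $E_{\bfa}$ is the open interval with endpoints $p_k/q_k$ (limit $\xi\to\infty$) and $(p_k+p_{k-1})/(q_k+q_{k-1})$ (value at $\xi=1$), and therefore
\begin{equation*}
\mu(E_{\bfa})=\biggl|\log_2\!\Bigl(1+\tfrac{p_k+p_{k-1}}{q_k+q_{k-1}}\Bigr)-\log_2\!\Bigl(1+\tfrac{p_k}{q_k}\Bigr)\biggr|.
\end{equation*}
Combining the two logarithms and again invoking the determinant identity to reduce $q_k(p_k+q_k+p_{k-1}+q_{k-1})-(p_k+q_k)(q_k+q_{k-1})$ to $(-1)^k$ turns this expression into $\bigl|\log_2\bigl(1+(-1)^k/[(p_k+q_k)(q_k+q_{k-1})]\bigr)\bigr|$, matching \eqref{eq:gk-generalized-distribution}.

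The main obstacle is supplying the ergodic-theoretic input. Checking that $\mu$ is $T$-invariant is a short direct computation based on the preimage decomposition $T^{-1}[0,y]=\bigcup_{n\ge 1}[1/(n+y),1/n]$, but proving that $T$ is \emph{ergodic} with respect to $\mu$ is substantially deeper. The classical route traces back to Kuzmin's quantitative convergence theorem for iterates of the transfer operator, requiring refined distortion estimates on cylinders; the modern route casts $T$ as a piecewise-expanding Markov map and invokes Perron--Frobenius-type spectral analysis on a Banach space of smooth densities. Once ergodicity is granted, the computation above is entirely elementary, and (as the remainder of the paper demonstrates) Theorems \ref{thm:Pk}, \ref{thm:S}, and \ref{thm:strings} all follow from this lemma by purely algebraic manipulations.
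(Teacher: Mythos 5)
The paper does not actually prove Lemma \ref{lem:gauss-kuzmin-generalized}; it imports it wholesale from Iosifescu--Kraaikamp, remarking only that the proof requires either delicate elementary estimates \`a la Kuzmin or ergodic theory. Your sketch is the standard ergodic-theoretic derivation, and it is correct as an outline: the identification of the digit-shift with iteration of the Gauss map, the reduction via Birkhoff's theorem to computing $\mu(E_{\bfa})$ for the Gauss measure, and the cylinder computation all check out. In particular the endpoint identification ($\xi\to\infty$ gives $p_k/q_k$, $\xi=1$ gives $(p_k+p_{k-1})/(q_k+q_{k-1})$) and the final simplification using $p_kq_{k-1}-p_{k-1}q_k=(-1)^{k-1}$ (cf.\ \eqref{eq:pn-qn-diff}) do yield $q_k(p_k+q_k+p_{k-1}+q_{k-1})-(p_k+q_k)(q_k+q_{k-1})=(-1)^k$ and hence exactly \eqref{eq:gk-generalized-distribution}; I verified this also recovers the single-digit case $k=1$. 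The one substantive ingredient you leave unproved is the ergodicity of the Gauss map, and you say so explicitly; since that is precisely the deep input the paper itself outsources to the literature, this is an acceptable black box rather than a gap. Two cosmetic points: the count in \eqref{eq:Pbfa-def} is $\#\{1\le i\le n: T^ix\in E_{\bfa}\}$, which differs from the Birkhoff sum $\sum_{i=0}^{n-1}\mathbf{1}_{E_{\bfa}}(T^ix)$ only by $O(1)$, so the limit is unaffected but is worth a word; and $\xi$ actually ranges over $(1,\infty)$ for irrational $y$ in the cylinder (the endpoints are attained only in the closure), which changes nothing measure-theoretically. What your route buys over the paper's citation is a self-contained reduction of the lemma to a single named theorem (ergodicity of the Gauss map) plus elementary algebra, which makes transparent where the difficulty genuinely lives.
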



\section{Proof of Theorems \ref{thm:Pk} and \ref{thm:S}}
\label{sec:subsets}

The proofs hinge on the following proposition, which expresses the
frequency $P(A)$ in terms of an infinite product over the set $A$,
along with evaluations for these products in the case of sets of
the form $\cP_k$ and $\cS$.

\begin{prop}[Explicit formula for subset frequencies]
\label{prop:subsets}
Let $A\subset \NN$.  Then, for almost all real numbers $x$, the 
continued fraction expansion of $x$ contains a digit from the set $A$ with
frequency 
\begin{equation}
\label{eq:PA-def2}
P(A)=\log_2\prod_{a\in A}\left(1-\frac1{(a+1)^2}\right)^{-1}=-\log_2\prod_{a\in A}\left(1-\frac1{(a+1)^2}\right).
\end{equation}
That is, almost all real numbers $x$ satisfy
\begin{equation}
\label{eq:PA-def3}
\lim_{n\to\infty}\frac1n\#\{1\le i\le n: a_i(x)\in A\} =P(A),
\end{equation}
with $P(A)$ given by \eqref{eq:PA-def2}.
\end{prop}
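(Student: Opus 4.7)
My plan is to bootstrap the statement from the single-digit Gauss-Kuzmin theorem (Lemma \ref{lem:gauss-kuzmin}) by linearity, handling the finite and infinite cases separately. The algebraic rewriting of the product is an easy calculation; the real content is the justification of the limit for \emph{infinite} sets $A$.

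\textbf{Step 1: Algebraic identity.} First I would observe the identity
\[
1+\frac1{a(a+2)}=\frac{(a+1)^2}{a(a+2)}=\left(1-\frac1{(a+1)^2}\right)^{-1},
\]
which translates the Gauss-Kuzmin probabilities $P(a)=\log_2(1+1/(a(a+2)))$ into the form appearing in \eqref{eq:PA-def2}. In particular, $P(a)=-\log_2(1-1/(a+1)^2)$, so for any \emph{finite} $A\subset\NN$, summing these identities and applying Lemma \ref{lem:gauss-kuzmin} to each $a\in A$ yields, for $x$ outside a null set depending on $A$,
\[
\lim_{n\to\infty}\frac1n\#\{1\le i\le n:a_i(x)\in A\}
=\sum_{a\in A}P(a)
=-\log_2\prod_{a\in A}\left(1-\frac1{(a+1)^2}\right).
\]

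\textbf{Step 2: Fix a full-measure set that works simultaneously for all single digits.} Let $E_a$ be the null set of $x$ for which the Gauss-Kuzmin limit fails at digit $a$. Since $\NN$ is countable, $E=\bigcup_{a\in\NN}E_a$ is still null, and for every $x\notin E$ the single-digit limits \eqref{eq:Pa-def} hold for \emph{all} $a$ simultaneously. We work with such $x$ throughout the rest of the argument, and this automatically handles the finite case from Step 1.

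\textbf{Step 3: The infinite case via a truncation sandwich.} For an arbitrary $A\subset\NN$ and $N\ge 1$, let $A_N=A\cap\{1,\dots,N\}$ and $B_N=\{1,\dots,N\}\setminus A$. Since $\mathbf{1}_{A_N}\le \mathbf{1}_A$ and $\mathbf{1}_A\le 1-\mathbf{1}_{B_N}$ pointwise on $\NN$, applying Step 2 (finite case) to $A_N$ and to $B_N$ gives, for each $x\notin E$,
\[
\sum_{a\in A_N}P(a)\;\le\;\liminf_{n\to\infty}\frac1n\#\{i\le n:a_i(x)\in A\}
\;\le\;\limsup_{n\to\infty}\frac1n\#\{i\le n:a_i(x)\in A\}
\;\le\;1-\sum_{a\in B_N}P(a).
\]
Since $\{P(a)\}_{a\in\NN}$ is a probability distribution, $\sum_{a\in\NN}P(a)=1$, and letting $N\to\infty$ the outer bounds both converge to $\sum_{a\in A}P(a)$. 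Hence the limit exists and equals $\sum_{a\in A}P(a)$, which by Step 1 equals $-\log_2\prod_{a\in A}(1-1/(a+1)^2)$, proving \eqref{eq:PA-def2}--\eqref{eq:PA-def3}.

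The only non-routine step is Step 3: the liminf inequality is immediate from monotonicity, but to control the limsup one cannot just let $A$ grow (infinite sums of positive terms might conceivably exceed the true frequency without the right bookkeeping). The trick is to bound the \emph{complement} instead, using that the full Gauss-Kuzmin distribution has total mass $1$. I expect this to be the main conceptual point of the proof; everything else is algebraic manipulation and the direct application of Lemma \ref{lem:gauss-kuzmin}.
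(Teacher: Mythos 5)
Your proof is correct and follows essentially the same route as the paper: both reduce $P(A)$ to $\sum_{a\in A}P(a)$ via the single-digit Gauss--Kuzmin theorem and then apply the algebraic identity $1+\frac{1}{a(a+2)}=\left(1-\frac{1}{(a+1)^2}\right)^{-1}$. The only difference is that you explicitly justify the passage to infinite sets $A$ (your Step 3 sandwich, bounding the limsup via the complement and using $\sum_{a}P(a)=1$), a point the paper's proof treats as immediate from Lemma \ref{lem:gauss-kuzmin}; your argument for that step is sound.
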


\begin{proof}
By Lemma \ref{lem:gauss-kuzmin} the frequency $P(A)$ defined in
\eqref{eq:PA-def3} exists for almost all real numbers and is given by
\begin{equation*}
\label{eq:PA-formula1}
P(A)=\sum_{a\in A} P(a) =\sum_{a\in A}\log_2 \left(1+\frac{1}{a(a+2)}\right)
=\log_2\prod_{a\in A}\left(1+\frac{1}{a(a+2)}\right).
\end{equation*}
Since
\begin{equation*}
\label{eq:Pa-algebra}
1+\frac{1}{a(a+2)}= \frac{(a+1)^2}{(a+1)^2-1}
=\left(1-\frac1{(a+1)^2}\right)^{-1},
\end{equation*}
this yields \eqref{eq:PA-def2}, as desired.
\end{proof}

\begin{proof}[Proof of Theorem \ref{thm:Pk}]
Suppose $A$ is a shifted power set,
i.e., $A=\cP_k=\{p^k-1: p\text{ prime }\}$, where $k\in\NN$. 
In this case the product on the
right of \eqref{eq:PA-def2} becomes
\begin{align}
\label{eq:thm1-proof}
\prod_{a\in \cP_k}\left(1-\frac1{(a+1)^2}\right)^{-1}
&=\prod_{p\text{ prime }}\left(1-\frac1{p^{2k}}\right)^{-1}=\zeta(2k),
\end{align}
where we have used the Euler product formula for the Riemann zeta
function
$\zeta(s)=\sum_{n=1}^\infty 1/n^s$ 
(see, e.g., \cite[Theorem 11.7]{apostol-analytic-number-theory-book})
\begin{equation*}
\label{eq:euler-product}
\zeta(s)=\prod_{p\text{ prime }}\left(1-\frac1{p^s}\right)^{-1}
\quad (\operatorname{Re}(s)>1).
\end{equation*}
Substituting \eqref{eq:thm1-proof} into \eqref{eq:PA-def2} yields
the desired formula \eqref{eq:Pk-result} for $P(\cP_k)$.

The formula \eqref{eq:P1-result} for $P(\cP_1)$ follows on noting that
$\zeta(2)=\sum_{n=1}^\infty 1/n^2=\pi^2/6$.
\end{proof}

\begin{proof}[Proof of Theorem \ref{thm:S}]
When $A$ is the set $\cS=\{n^2-1: n\in\NN, n>1\}$ of shifted squares,
we have
\begin{align}
\label{eq:thm2-proof1}
\prod_{a\in \cS}\left(1-\frac1{(a+1)^2}\right)^{-1}
&=\prod_{n=2}^\infty\left(1-\frac1{n^4}\right)^{-1}
\\
\notag
&=\left(\prod_{n=2}^\infty\left(1-\frac1{n^2}\right)
\prod_{n=2}^\infty\left(1+\frac1{n^2}\right)\right)^{-1}.
\end{align}
The first of the two products on the right is easy to evaluate by a
telescoping argument:
\begin{align}
\label{eq:thm2-proof-prod1}
\prod_{n=2}^\infty \left(1-\frac1{n^2}\right)
=\lim_{N\to\infty} \prod_{n=2}^N \frac{n-1}{n}\cdot \frac{n+1}{n}
=\lim_{N\to\infty} \frac{N+1}{2N}=\frac12.
\end{align}
To evaluate the second product, we use Euler's product formula for the
sine function 
(see \cite{eberlein1977} or 
\cite[Section 3.23]{titchmarsh-theory-of-functions-book})
\begin{equation}
\label{eq:euler-sinc-formula}
\frac{\sin(\pi z)}{\pi z}=\prod_{n=1}^\infty
\left(1-\frac{z^2}{n^2}\right),
\end{equation}
which is valid for all complex numbers $z$. 
Setting $z=i$ in \eqref{eq:euler-sinc-formula},
it follows that
\begin{equation}
\label{eq:euler-sinc-formula1}
\frac{\sin(\pi i)}{\pi i}=
\prod_{n=1}^\infty \left(1+\frac{1}{n^2}\right)
=2\prod_{n=2}^\infty \left(1+\frac{1}{n^2}\right).
\end{equation}
Hence
\begin{align}
\label{eq:thm2-proof-prod2}
\prod_{n=2}^\infty \left(1+\frac1{n^2}\right)
&=\frac{\sin (\pi i)}{2\pi i}=\frac{e^{\pi}-e^{-\pi}}{4\pi}.
\end{align}
Substituting
\eqref{eq:thm2-proof-prod1} and \eqref{eq:thm2-proof-prod2}
into \eqref{eq:thm2-proof1} and applying Proposition 
\ref{prop:subsets} yields
\begin{equation}
\label{eq:thm2-proof3}
P(\cS)= 
\log_2\left(\prod_{n=2}^\infty \left(1-\frac1{n^2}\right)
\prod_{n=2}^\infty \left(1+\frac1{n^2}\right)\right)^{-1}
=\log_2 \left(\frac{8\pi}{e^{\pi}-e^{-\pi}}\right),
\end{equation}
as desired.
\end{proof}


\section{Proof of Theorem \ref{thm:strings} and Corollary
\ref{cor:strings}}
\label{sec:strings}

\begin{proof}[Proof of Theorem \ref{thm:strings}]
Let $a$ and $k$ be positive integers, and let $\bfa=(a,\dots,a)$ denote
the string consisting of $k$ consecutive digits $a$.  By Lemma
\ref{lem:gauss-kuzmin-generalized}, the frequency $P(\bfa)$ is given by
\begin{equation}
\label{eq:gk-generalized-distribution0}
P(\bfa)=
\left|\log_2\left(1+\frac{(-1)^k}{(p_k+q_k)(q_k+q_{k-1})}\right)\right|,
\end{equation}
where $p_k$ and $q_k$ are defined recursively by 
\begin{align}
\label{eq:pn-recurrence0}
&p_{-1}=1,\quad p_0=0,\quad p_n=ap_{n-1}+p_{n-2}\quad (n=1,\dots,k),
\\
\label{eq:qn-recurrence0}
&q_{-1}=0,\quad q_0=1,\quad q_n=aq_{n-1}+q_{n-2}\quad (n=1,\dots,k).
\end{align}
Setting  
\begin{equation}
\label{eq:rk-sk-def}
r_n=q_n+q_{n-1},\quad s_n=p_n+q_{n},
\end{equation}
it follows from \eqref{eq:pn-recurrence0} and \eqref{eq:qn-recurrence0}
that 
\begin{align}
\label{eq:rn-recurrence}
&r_0=1,\quad r_1=a+1,\quad r_n=ar_{n-1}+r_{n-2}\quad (2\le n\le k),
\\
\label{eq:sn-recurrence}
&s_0=1,\quad s_1=a+1,\quad s_n=as_{n-1}+s_{n-2}\quad (2\le n\le k).
\end{align}
Thus, the sequences $r_n$ and $s_n$ satisfy the same recurrence as
the sequence $\Fa_n$ of Theorem \ref{thm:strings}. In addition, their
values at $n=0$ and $n=1$, i.e., $r_0=s_0=1$ and $r_1=s_1=a+1$,
agree with the values of $\Fa_n$ at $n=2$ and $n=3$, namely
$\Fa_2=1$ and $\Fa_3=a\Fa_2+\Fa_1=a+1$. Therefore we have
\begin{equation*}
\label{eq:rn-formula}
r_n=s_n=\Fa_{n+2}\quad (1\le n\le k).
\end{equation*}
Hence
\begin{align*}
\log_2\left(1+\frac{(-1)^k}{(p_k+q_k)(q_k+q_{k-1})}\right)
&=\log_2\left(1+\frac{(-1)^k}{r_ks_k}\right)
=\log_2\left(1+\frac{(-1)^k}{\left(\Fa_{k+2}\right)^2}\right).
\end{align*}
Substituting this into 
\eqref{eq:gk-generalized-distribution0} yields the desired result. 
\end{proof}

\begin{proof}[Proof of Corollary \ref{cor:strings}]
Let 
$\Phi=(1+\sqrt{5})/2=1.618034\dots$ denote the Golden Ratio and let
$\overline{\Phi}=(1-\sqrt{5})/2=1/\Phi=-0.618034\dots$ denote its conjugate.
Then Binet's formula for Fibonacci numbers
(see, e.g., \cite[(10.14.1)]{hardy-wright2008}) yields  
\begin{equation}
\label{eq:binet}
F_n=\frac{\Phi^n-{\overline{\Phi}}^n}{\sqrt{5}}=
\frac{\Phi^n}{\sqrt{5}}+O\left(\frac1{\Phi^n}\right),
\end{equation}
and hence 
\begin{equation}
\label{eq:binet2}
\frac1{F_{k+2}^2}
=\frac{5}{\Phi^{2(k+2)}+O(1)}
=\frac{5}{\Phi^{2(k+2)}}\left(1+O\left(\frac{1}{\Phi^{2(k+2)}}\right)\right).
\end{equation}
Combining this with the elementary estimate 
\begin{equation}
\label{eq:log-estimate}
\log_2(1+x)=\frac{\log(1+x)}{\log 2}
= \frac{x}{\log 2}\left(1 +O(x)\right),
\end{equation}
which holds as $x\to0$, 
it follows from Theorem \ref{thm:strings} that 
\begin{equation}
\label{eq:string1-result2a}
P(\underbrace{(1,\dots,1)}_k)=
\left|\log_2\left(1+\frac{(-1)^k} {F_{k+2}^2}\right)\right|
=\frac{5}{\Phi^{2(k+2)}\log 2}\left(1 +
O\left(\frac1{\Phi^{2(k+2)}}\right)\right),
\end{equation}
as claimed.
\end{proof}

\begin{remark}
By replacing the $O$-estimates in \eqref{eq:binet}, \eqref{eq:binet2},
and \eqref{eq:log-estimate} with explicit inequalities, one can obtain
an explicit value for the $O$-constant in 
\eqref{eq:string1-result2a} 
(i.e., the estimate 
\eqref{eq:string1-result2}
of Corollary \ref{cor:strings}). 
In particular, one can show that an admissible value for this $O$-constant
is $1$; that is, the expression
represented by the $O$-term in \eqref{eq:string1-result2a} 
is bounded in absolute value 
by $\le 1/\Phi^{2(k+2)}$ for all $k$. 
We leave out the details, which are routine, though rather tedious.
\end{remark}

\section{Statistical analysis of continued fraction digits of $\pi$}
\label{sec:pi}

The number $\pi$ is arguably the most famous mathematical constant, and
it has been the subject of more studies---both theoretical and
numerical---than any other mathematical constant.  The decimal
expansion of $\pi$ has been calculated to more than twenty trillion
digits \cite{trueb2016} and has been subjected to extensive statistical
analyses; see, for example, \cite{bailey1988}, \cite{bailey2014}.

By comparison, the continued fraction digits of $\pi$ have received much
less attention in the literature, with most of the empirical studies
focusing on \emph{single digit} frequencies among the digits of $\pi$.
For example, \cite[Figure 3.2]{neverending-fractions-book} shows the
deviations between observed and predicted frequencies of digits $1$
through $15$ based on the first 100 million continued fraction digits of $\pi$.

In light of Theorems \ref{thm:Pk}--\ref{thm:strings} it seems natural to
compare, in a statistically rigorous manner, the frequencies predicted
by these results (i.e., the actual frequencies in the
continued fraction expansion of a ``random'' real number), with the
corresponding empirical frequencies based on the continued fraction
digits of $\pi$. 

\subsection{Methodology}

Our analysis is based on a data set of $300$ million continued
fraction digits of $\pi$.
We first split the full data set of $300$ million digits into  $B$
disjoint blocks of length $N$ each (so that $B\cdot N=3\cdot 10^8$)
and apply a given statistical test to
each of these $B$ blocks.  The test outputs a test statistic (for
example, a z-score) for each of the $B$ blocks.  We then employ a
second statistical test to compare the $B$ values of the test statistic
to their predicted distribution.  Such a two-stage testing procedure is
based on NIST recommendations for randomness tests, and it
results in improved reliability when compared to applying a test to the
full data set; see \cite{pareschi2007}. 

To ensure robustness of the results, we carry out our tests with three
different values for the block size, namely blocks of $N=$ 250,000; 500,000; and 1,000,000 digits.
This corresponds to splitting the full set of $300$ million digits into
$B=$ 12,000; 6,000; and 3,000 blocks, respectively.

We carry out this process with the following tests.

\paragraph{Test I: Z-score test for shifted primes.}
By Theorem \ref{thm:Pk}, the predicted frequency of
continued fraction digits of the form $p-1$, where $p$ is prime, is 
\begin{equation}
\label{eq:q-value-p1}
q=\log_2 (\pi^2/6)=0.718029\dots 
\end{equation}
To test this prediction, we 
model the continued fraction digits as Bernoulli random variables with
parameter $q$ and compute, for each block of size $N$,
the corresponding $z$-score, given by
\begin{equation}
\label{eq:z-score-def}
z=\frac{N_1-Nq}{\sqrt{Nq(1-q)}},
\end{equation}
where $N_1$ is the actual number of
digits of the form $p-1$ among the $N$ digits in the block.

We expect these $z$-scores to be approximately normally distributed.  To
test this, we apply the Kolmogorov-Smirnov and Anderson-Darling
goodness-of-fit tests (see \cite[1.3.5.14 and
1.3.5.16]{nist2003-chapter1}) to this set of $z$-scores.

\paragraph{Test II: Z-score test for shifted squares.}
We employ an analogous testing procedure to digits of the form $n^2-1$,
where $n\in\NN$, $n>1$, with the predicted probability  being given by Theorem
\ref{thm:S}, namely
\begin{equation}
\label{eq:q-value-s}
q=\log_2\left(\frac{8\pi}{e^\pi-e^{-\pi}}\right)=0.121832\dots
\end{equation}

\paragraph{Test III: Longest run of $1$s.}
Our third test is based on the predicted frequencies of ``runs'' of
digits $1$ given by Theorem \ref{thm:strings}, and the approximations
provided by Corollary \ref{cor:strings}.  Specifically, we determine, for
each block,
the length of the longest run of consecutive digits $1$, and
then compute the average of these longest run lengths across all
blocks.  This average can be considered as an approximation to the
\emph{expected} length of the longest string of $1$s in an appropriate
random model.  

While consecutive continued fraction digits are not independent, the
result of Corollary \ref{cor:strings} and the numerical data provided in Table
\ref{table:string1frequencies} suggest that, for ``long'' strings of
$1$, each additional digit $1$ decreases the frequency by a factor
$q=1/\Phi^2\approx 0.381966\dots$. This suggests that, in the
context of ``long'' runs of $1$s, digits behave approximately like
Bernoulli random variables with probability $q=1/\Phi^2$. Let 
$L_{N,q}$ denote the length of the longest run of $1$s in a sequence of
$N$ independent Bernoulli random variables that take on the value $1$
with probability $q$, and $0$ with probability $1-q$. It is known 
(see \cite[(5)]{schilling1990}) that, as $N\to\infty$, we have
\begin{equation}
\label{eq:longest-run-prediction}
E(L_{N,q})=\frac{\log(N(1-q))}{\log (1/q)} + \gamma
\log(1/q)-\frac12 + o(1),
\end{equation}
where $\gamma$ is the Euler-Mascheroni constant. Ignoring the term $o(1)$,    we
compare the prediction \eqref{eq:longest-run-prediction} with the
average length of the longest run computed from our data. 

\paragraph{Implementation notes.}
We used \emph{Mathematica 13.1} to generate the set of $300$ million
continued fraction digits of $\pi$ that formed the basis of our analysis.
We used Python for the first stage tests described above and
\emph{Mathematica} for subsequent statistical analysis and visualization,
such as generating histograms of z-scores and applying the
Kolmogorov-Smirnov and Anderson-Darling goodness-of-fit tests to the
z-scores produced by Tests I and II.

\subsection{Results}

\paragraph{Tests I and II:}
Figure \ref{fig:zscores} shows the results of the z-score tests, applied 
to frequencies of digits of the form $p-1$ (top row)
and $n^2-1$ (bottom row) for block sizes $N=$ 250,000; 500,000; and 1,000,000.  The actual distributions of $z$-scores are shown as
density histograms, with the density function of a standard normal
distribution overlayed.  The z-scores were computed via formula
\eqref{eq:z-score-def} with the probabilities $q$ being those given by
\eqref{eq:q-value-p1} for the case of shifted primes and 
\eqref{eq:q-value-s} for the case of shifted squares.

\begin{figure}[H]
\begin{center}
\includegraphics[width=.3\textwidth]{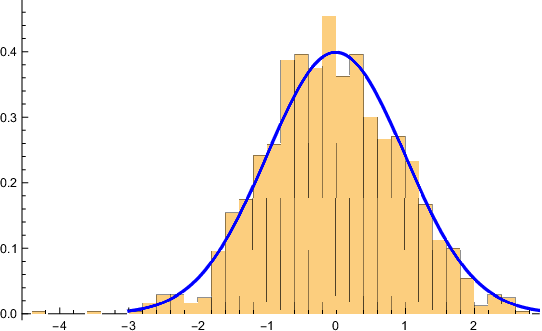}
\hspace{1em}
\includegraphics[width=.3\textwidth]{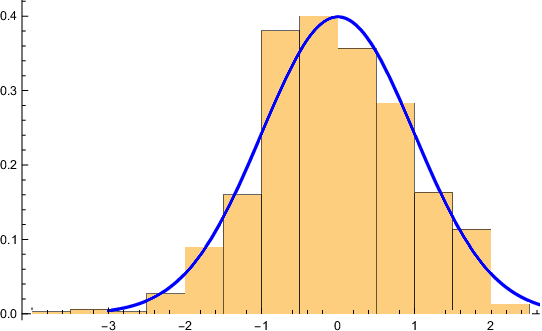}
\hspace{1em}
\includegraphics[width=.3\textwidth]{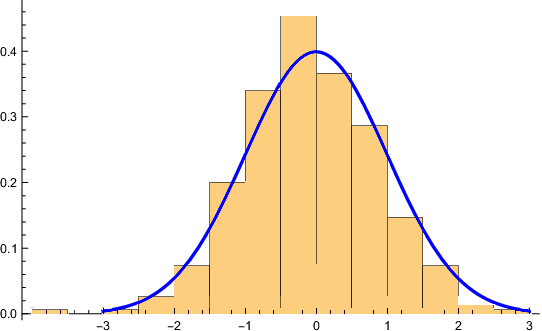}
\\
\end{center}
\begin{center}
\includegraphics[width=.3\textwidth]{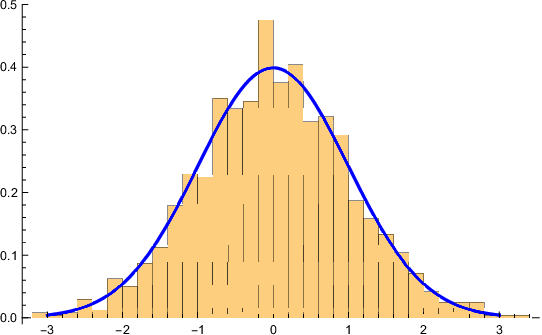}
\hspace{1em}
\includegraphics[width=.3\textwidth]{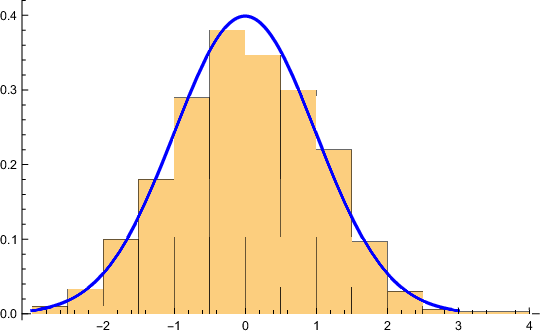}
\hspace{1em}
\includegraphics[width=.3\textwidth]{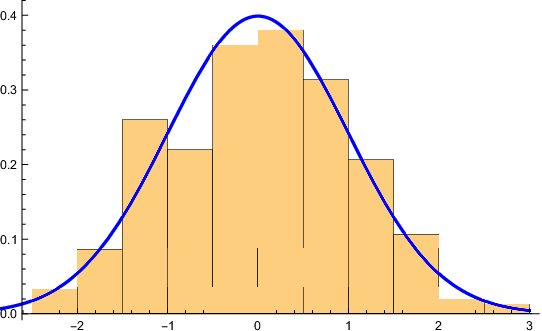}
\\
\end{center}
\caption{Distribution of z-scores for digits of the form $p-1$ (top row)
and $n^2-1$ (bottom row), corresponding to block sizes 
250,000 (left figure), 
500,000 (middle figure), 
and 1,000,000 (right figure), 
}
\label{fig:zscores}
\end{figure}

The histograms in Figure \ref{fig:zscores} suggest that the z-scores are
approximately normally distributed, as one would expect under the null
hypothesis that the continued fraction digits of $\pi$ of the form 
$p-1$ and $n^2-1$ behave like independent Bernoulli random variables,
with frequencies $q$ given by \eqref{eq:q-value-p1} and
\eqref{eq:q-value-s}, respectively.

For a more precise analysis we applied the  Kolmogorov-Smirnov and
Anderson-Darling goodness-of-fit tests to each set of z-scores,
comparing these scores to those drawn from a standard normal
distribution.  The resulting $p$-values are shown in Table
\ref{table:ks-ad-test}. None of these $p$-values is significant at the
$0.05$ level, thus providing further strong evidence for the 
``randomness'' of the continued fraction digits of $\pi$.  

\begin{table}[H]
\begin{center}
\renewcommand{\arraystretch}{1.5}
\begin{tabular}{|c|c|c|}
\hline
Block length & KS p-value&  AD p-value
\\
\hline
250,000
&0.4656
&0.1850
\\
500,000
&0.0779
&0.0525 
\\
1,000,000
&0.8915
&0.8821 
\\
\hline
\end{tabular}
\hspace{1em}
\begin{tabular}{|c|c|c|}
\hline
Block length & KS p-value&  AD p-value
\\
\hline
250,000
&0.6180 &0.9021
\\
500,000
&0.9077 & 0.7466 
\\
1,000,000
&0.0643 & 0.1076
\\
\hline
\end{tabular}
\caption{P-values from the Kolmogorov-Smirnov (KS)
and Anderson-Darling (AD) tests, applied to the z-scores
for digits of the form $p-1$ (left table) and $n^2-1$ (right
table).} 
\label{table:ks-ad-test}
\end{center}
\end{table}

\paragraph{Test III:}
Table \ref{table:longest-runs} shows the average longest run length,
with the average being taken over all blocks of length $N$, along with
the corresponding predicted values, given by the approximation 
\eqref{eq:longest-run-prediction} for the expected length of the longest
run of $1$s in a Bernoulli model with parameters $N$ and $q=1/\Phi^2$.

\begin{table}[H]
\begin{center}
\renewcommand{\arraystretch}{1.5}
\begin{tabular}{|c|c|c|c|}
\hline
Block length & Average longest run length& Prediction 
\\
\hline
250,000&
12.5267&
12.5142
\\
500,000&
13.2617&
13.2345
\\
1,000,000&
13.9567&
13.9547
\\
\hline
\end{tabular}
\caption{Average length of the longest run of $1$s in a block
of length $N$. The prediction is based on the asymptotic formula  
\eqref{eq:longest-run-prediction}.
}
\label{table:longest-runs}
\end{center}
\end{table}

\section{Concluding Remarks}
\label{sec:conclusion}

We conclude this paper by mentioning possible extensions and
generalizations of our results and some open questions suggested by
these results.

Proposition \ref{prop:subsets} shows that there is a close
connection between the subset probabilities $P(A)$ defined by 
\eqref{eq:PA-def} and identities for infinite products of the form 
$\prod_{b\in B}(1-1/b^2)$. Any set $B$ for which the latter product 
has a closed formula gives rise to a set $A$ for which $P(A)$ has a
closed formula of similar type.  Theorems \ref{thm:Pk} and
\ref{thm:S} exhibit two classes of sets for which such closed formulas
exist and are strikingly simple.  To keep the exposition simple, 
we decided to focus on these particular classes of sets, rather than
attempting to state and prove our results in their most general form.

Here we mention, without proof, one such generalization: 
Given an integer $k\ge 2$, let
$S_k$ be the set of ``shifted $k$th powers,'' i.e., $S_k=\{n^k-1:
n\in\NN, n>1\}$.  For $h=1,\dots,k-1$ let $\omega_h=e^{\pi i h/k}$.
Then
\begin{equation}
\label{eq:Sk}
P(A)=-\log_2\left|\frac{1}{2k\pi^{k-1}}
\prod_{h=1}^{k-1}\sin(\pi\omega_h)\right|.
\end{equation}
This result generalizes Theorem \ref{thm:S} and it can be proved using
the same approach, though the technical details are significantly more
involved. In fact, one can prove a slighly more general result for sets 
$S_{k,a}=\{an^k-1: n\in\NN\}$, where $a$ and $k$ are fixed 
positive integers.

Theorem \ref{thm:strings} on frequencies of strings of the form
$(a,\dots,a)$ can also be generalized in various directions. One such  
generalization is to strings of the form $\bfa=(A,\dots,A)$, consisting
of $k$ repeated blocks $A$, where $A$ is a given finite block of digits.
The resulting frequency $P(\bfa)$ can be expressed in terms of 
two-term linear recurrence sequences similar to the sequence $\Fa_n$ in
Theorem \ref{thm:strings}, but with coefficients and initial conditions
defined in terms of the block $A$.  In the case of 
a two-digit block $A=(a,b)$, repeated $k$ times, this frequency is given by
\begin{equation}
\label{eq:string-result-generalized}
P(\underbrace{(A,\dots,A)}_{k})=
P(\underbrace{(a,b,\dots,a,b)}_k)=
\left|\log_2\left(1+\frac{1} {\Fab_{k+2}\Gab_{k+2}}\right)\right|,
\end{equation}
where the sequences $\Fab_n$ and $\Gab_n$ are defined recursively by 
\begin{align*}
\label{eq:Fab-def}
&\Fab_1=b-1, \Fab_2=1,\quad \Fab_n=(ab+2)\Fab_{n-1} -\Fab_{n-2}
\quad(n\ge 3),
\\
&\Gab_1=a-1, \Gab_2=1,\quad \Gab_n=(ab+2)\Gab_{n-1} -\Gab_{n-2}
\quad(n\ge 3).
\end{align*}
We note that, from the general theory of linear recurrences, the
sequences $\Fab_n$ and $\Gab_n$ have explicit representations as linear
combinations of roots of the associated characteristic equation. 

In testing the predictions of our theorems, we focused on the continued
fraction expansion of the number $\pi$, in part because of the status of
$\pi$ as the most famous and most extensively studied mathematical
constant, but also because large sets of continued fraction digits of
$\pi$ can reliably be computed using off-the-shelf software such as
\emph{Mathematica}.  A possible direction for future research would be to
perform a systematic statistical analysis of the continued fraction
digits of particular classes of irrational numbers---for example,
algebraic numbers of degree greater than $2$.

Among open problems, the most glaring is a proof that the continued
fraction digits of $\pi$ and other ``natural'' irrational constants
are indeed ``random,'' in the sense that these numbers fall into the
class of ``almost all'' real numbers for which results such as the
Gauss-Kuzmin theorem and Theorems \ref{thm:Pk}--\ref{thm:strings} hold.
But this seems to be an intractable problem under currently available
methods.

\subsection*{Acknowledgements} This work originated with an undergraduate
research project carried out at the \emph{Illinois Geometry Lab}
(IGL) at the University of Illinois; we thank the IGL for providing this
opportunity.  We are also grateful to the referee for a careful reading
of this paper and many helpful suggestions. 





\end{document}